\documentclass[12pt,reqno]{amsart}
\usepackage{amssymb}
\usepackage{txfonts}
\usepackage{amsmath}

\vfuzz2pt 

 \newtheorem{thm}{Theorem}[section]

 \theoremstyle{definition}
 
 \theoremstyle{remark}
 
 \numberwithin{equation}{section}

\begin{document}

\title[]
{On the weighted forward reduced Entropy of Ricci
flow}

\author{Liang Cheng, Anqiang Zhu}

\dedicatory{}
\date{}

 \subjclass[2000]{
Primary 53C44; Secondary 53C42, 57M50.}

\keywords{ Ricci flow, weighted forward reduced volume, Type III
singularities, gradient expanding soliton}
\thanks{}

\address{Liang Cheng, School of Mathematics and Statistics, Huazhong Normal University,
Wuhan, 430079, P.R. CHINA}

\email{math.chengliang@gmail.com}

\address{Anqiang Zhu, School of Mathematics and Statistics, Wuhan University,
Wuhan, 430072, P.R. CHINA}

\email{anqiangzhu@yahoo.com.cn} \maketitle

\begin{abstract}
In this paper, we introduce the weighted forward reduced
volume of Ricci flow. The
weighted forward reduced volume, which related to expanders of Ricci flow, is well-defined on noncompact
manifolds and monotone non-increasing under Ricci flow.  Moreover, we show that, just the same as the
Perelman's reduced volume, the weighted reduced volume entropy has
the value $(4\pi)^{\frac{n}{2}}$ if and only if the Ricci flow is
the trivial flow on flat Euclidean space.
\end{abstract}

\section{Introduction}
In \cite{P1}, G.Perelman introduced the reduced entropy (i.e. reduced distance and reduced volume), which becomes one of powerful tools for studying Ricci flow.
The reduced entropy enjoys very nice analytic and geometric properties,
including in particular the monotonicity of the reduced volume. These
properties can be used, as demonstrated by Perelman, to show the limit of the suitable rescaled Ricci flows is a gradient shrinking soliton.

Then M.Feldman, T.Ilmanen, L.Ni \cite{FIN} observed that there is a dual version of G.Perelman's reduced entropy, which related to the expanders
of Ricci flow.
Let $g(t)$ solves the Ricci flow
\begin{align}\label{Ricci_flow}
    \frac{\partial g}{\partial t}=-2Rc.
\end{align}
 on $M\times
[0,T]$. Fix $x\in M^n$ and let $\gamma$ be a path $(x(\eta),\eta)$
joining $(x,0)$ and $(y,t)$. They
define the forward $\mathcal{L}_+$-length as
\begin{align}
    \mathcal{L}_+(\gamma)=\int^t_0\sqrt{\eta}(R(\gamma(\eta))+|\gamma'(\eta)|^2)d\eta.
\end{align}
Denote $L_+(y,t)$ be the length of a shortest forward
$\mathcal{L}_+$-length joining $(x,0)$ and $(y,t)$. Let
\begin{align}\label{reduced_l}
l_+(y,t)=\frac{L_+(y,t)}{2\sqrt{t}}
\end{align}
be the forward $l_+$-length. Note that the forward reduced distance
(\ref{reduced_l}) is defined under the forward Ricci flow
(\ref{Ricci_flow}), which is the only difference from Perelman's
reduced distance defined under the backward Ricci flow. The forward
reduced volume is defined in \cite{FIN} as
\begin{align}\label{compact_r_volume}
   \theta_+(t)=\int_{M} (t)^{-\frac{n}{2}}e^{l_+(y,t)}dvol(y).
\end{align}
 They also proved forward reduced volume
defined in (\ref{compact_r_volume}) is monotone non-increasing along
the Ricci flow (\ref{Ricci_flow}).

 Unfortunately, the forward
reduced volume defined in (\ref{compact_r_volume}) may not
well-defined on noncompact manifolds. In the first part of this
paper, we introduce the weighted forward reduced volume in
this paper based on the work in \cite{FIN} and \cite{P1}. The
weighted forward reduced volume is well-defined on noncompact
manifolds and monotone non-increasing under the Ricci flow
(\ref{Ricci_flow}). Moreover, we show that, just the same as the
Perelman's reduced volume, the weighted reduced volume entropy has
the value $(4\pi)^{\frac{n}{2}}$ if and only if the Ricci flow is
the trivial flow on flat Euclidean space.

We define the weighted forward reduced volume as follows. First, we
define the forward $\mathcal{L}_+$-exponential map $\mathcal{L}_+
exp(V,t) : T_xM \to M$ at time $t\in [0, T)$. For $V\in T_xM$, let
$\gamma_V$ denote the $\mathcal{L}_+$-geodesic such that
$\gamma_V(0)=p$, $\lim\limits_{t\to 0} \sqrt{t} \gamma'_V(t) = V$.
If $\gamma_V$ exists on $[0,t]$, we set
\begin{align}\label{Lexp}
\mathcal{L}_+ exp(V,t) = \gamma_V(t).
\end{align}
Denote $\tau_V$ be the first time the $\mathcal{L}_+$-geodesic
$\gamma_V$ stop minimizing. Define
\begin{align*}
    \Omega(t)=\{V\in T_xM^n:\tau_V>t\}.
\end{align*}
Obviously, $\Omega(t_2)\subset \Omega(t_1)$ if $t_1<t_2$. Let
$J_i^V(t), i=1,\cdots,n,$ be $\mathcal{L}_+$-Jacobi fields along
$\gamma_V(t)$ with
\begin{align}\label{Jacobi}
J_i^V(0)=0, (\nabla_V J_i^V)(0)=E^0_i,
\end{align}
where
$\{E^0_i\}^n_{i=1}$ is an orthonormal basis for $T_xM$ with respect
to $g(0)$. Then $D(\mathcal{L}_+ exp(V,t))(E^0_i)=J^V_i(t)$.
 We
define
\begin{align}\label{LJacobi}
\mathcal{L_+}J_V(t)=\sqrt{det(<J^V_i(t),J^V_j(t)>)}
\end{align}
and
the weighted forward reduced volume as
\begin{align}\label{WFRV}
\mathcal {\widetilde{V}}_+(t)=\int_{\Omega(t)} t^{-\frac{n}{2}}
e^{l_+(\gamma_V(t),t)}
e^{-2|V|^2_{g(0)}}\mathcal{L_+}J_V(t)dx_{g(0)}(V),
\end{align}
where $dx_{g(0)}$ is the standard Euclidean volume form on
$(T_xM,g(x,0))$, i.e. we define the weighted forward reduced volume as
\begin{align}\label{WFRV2}
\mathcal {\widetilde{V}}_+(t)=\int_{\Omega(t)} t^{-\frac{n}{2}}
e^{l_+(y,t)}e^{-2|\mathcal{L}_+ exp^{-1}(y,t)|^2_{g(0)}}dvol(y),
\end{align}
We use the convention
\begin{align*}
    \mathcal{L_+}J_V(t)\doteq 0 \text{ for } t\geq \tau_V.
\end{align*}
 Then we can write the weighted forward
reduced volume as
\begin{align}\label{WFRV1}
\mathcal {\widetilde{V}}_+(t)=\int_{T_xM^n} t^{-\frac{n}{2}}
e^{l_+(\gamma_V(t),t)}
\mathcal{L_+}J_V(t)e^{-2|V|^2_{g(0)}}dx_{g(0)}(V).
\end{align}

We remark that the density of forward reduced volume (\ref{compact_r_volume}) (i.e. $(t)^{-\frac{n}{2}}e^{l_+(y,t)}dvol(y)$)
is not pointwise monotone non-increasing under the Ricci flow (\ref{Ricci_flow}).
So it is not easy for us to add the weighted term to the forward reduced volume such that it could be
 defined on noncompact manifolds. In order to overcome this problem, we employ the G.Perelman's technique \cite{P1} that we pull the
 density of forward reduced volume back to the tangent space with the $\mathcal{L_+}$-exponential map (here we define the $\mathcal{L}_+$-exponential map
 as the similar way to \cite{P1}). Then we prove that the forward reduced volume density
 \begin{align*}
d\mathcal {V}_+=t^{-\frac{n}{2}} e^{l_+(\gamma_V(t),t)}
\mathcal{L_+}J_V(t)dx_{g(0)}(V),
\end{align*}
is pointwise monotone non-increasing under the Ricci flow (\ref{Ricci_flow}) with respect to $V\in T_xM$ (see Theorem \ref{R_Volume}). Just notice that (see the proof of Theorem \ref{WFRV_monotone})
\begin{align*}
    \lim\limits_{t\to 0^+} t^{-\frac{n}{2}} e^{l_+(\gamma_V(t),t)} \mathcal{L_+}J_V(t)=2^n e^{|V|^2_{g(0)}}.
\end{align*}
So we only need add the weighted term $e^{-2|V|^2_{g(0)}}$ to the density of forward reduced volume,
 which garrantees that the weighted forward reduced volume we defined in (\ref{WFRV}) is well defined on noncompact manifolds at $t=0$. Moreover, the
the weighted forward reduced volume is monotone non-increasing under the Ricci flow (\ref{Ricci_flow}) since we have
$$
\frac{d}{dt}(t^{-\frac{n}{2}} e^{l_+(\gamma_V(t),t)} \mathcal{L_+}J_V(t)e^{-2|V|^2_{g(0)}})=e^{-2|V|^2_{g(0)} }\frac{d}{dt}(t^{-\frac{n}{2}} e^{l_+(\gamma_V(t),t)} \mathcal{L_+}J_V(t))\leq 0
$$
for $V\in \Omega(t)$.

We exactly have the following properties for the weighted forward reduced
volume.
\begin{thm}\label{WFRV_monotone}
The weighted forward reduced volume defined in (\ref{WFRV}) is
monotone non-increasing under the Ricci flow (\ref{Ricci_flow}) and well-defined on complete noncompact manifolds. Moreover,
$\mathcal {\widetilde{V}}_+(t)\leq \lim\limits_{t\to 0+}\mathcal
{\widetilde{V}}_+(t)\leq(4\pi)^{\frac{n}{2}}$ for $t>0$. If $\mathcal
{\widetilde{V}}_+(t_1)=\mathcal {\widetilde{V}}_+(t_2)$ for some
$0<t_1<t_2$, then this flow is a gradient expanding soliton on
$0\leq t<\infty$ and hence is the trivial flow on flat Euclidean
space. In particular, if
 $\mathcal {\widetilde{V}}_+(\bar{t})=(4\pi)^{\frac{n}{2}}$ for some
time $\bar{t}>0$, then this flow is the trivial flow on flat
Euclidean space.
\end{thm}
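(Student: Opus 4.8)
The plan is to reduce everything to the pointwise statement already isolated in the excerpt. Write the unweighted forward reduced volume density as $t^{-n/2}e^{l_+(\gamma_V(t),t)}\mathcal L_+J_V(t)$ and recall from Theorem \ref{R_Volume} that for each fixed $V$ this is monotone non-increasing in $t$ (and $\doteq 0$ for $t\ge\tau_V$ by the stated convention). Since the weight $e^{-2|V|^2_{g(0)}}$ is independent of $t$, the full weighted density is also monotone non-increasing in $t$ for each $V$; integrating this pointwise inequality over $T_xM$ in the form (\ref{WFRV1}) gives at once that $\widetilde{\mathcal V}_+(t)$ is monotone non-increasing. For the bound and well-definedness, monotonicity shows the weighted density at time $t$ is dominated by its value as $t\to 0^+$, which by the stated limit $\lim_{t\to 0^+}t^{-n/2}e^{l_+(\gamma_V(t),t)}\mathcal L_+J_V(t)=2^n e^{|V|^2_{g(0)}}$ equals $2^n e^{-|V|^2_{g(0)}}$; hence for every $t>0$
\[
\widetilde{\mathcal V}_+(t)\le \int_{T_xM}2^n e^{-|V|^2_{g(0)}}\,dx_{g(0)}(V)=2^n\pi^{n/2}=(4\pi)^{n/2},
\]
which is finite regardless of whether $M$ is compact. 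Combining with monotonicity gives $\widetilde{\mathcal V}_+(t)\le \lim_{s\to 0^+}\widetilde{\mathcal V}_+(s)\le(4\pi)^{n/2}$, and since $\Omega(s)\uparrow T_xM$ as $s\downarrow 0$ (short $\mathcal L_+$-geodesics being minimizing) the monotone convergence theorem in fact yields $\lim_{s\to 0^+}\widetilde{\mathcal V}_+(s)=(4\pi)^{n/2}$.

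For the rigidity, suppose $\widetilde{\mathcal V}_+(t_1)=\widetilde{\mathcal V}_+(t_2)$ with $0<t_1<t_2$. Then $\widetilde{\mathcal V}_+$ is constant on $[t_1,t_2]$, and because the weighted density is monotone non-increasing in $t$ pointwise and strictly positive on $\Omega(t_1)$, it must be constant in $t\in[t_1,t_2]$ for a.e.\ $V$, while $\Omega(t_1)$ and $\Omega(t_2)$ agree up to a null set. I would then revisit the proof of Theorem \ref{R_Volume}: there $\frac{d}{dt}\log\bigl(t^{-n/2}e^{l_+(\gamma_V(t),t)}\mathcal L_+J_V(t)\bigr)$ is written as a sum of manifestly non-positive contributions coming from the $\mathcal L_+$-second variation and a Jacobi-field/Hamilton-type matrix comparison, and its vanishing along $\gamma_V$ forces the self-similarity identities $Rc+\nabla^2 l_+=\tfrac{1}{2t}g$ together with the accompanying trace-Harnack equality. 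Since the $\mathcal L_+$-geodesics issuing from $(x,0)$ sweep out $M$ at each $t\in(t_1,t_2)$, these identities propagate (by continuity) to all of $M\times(t_1,t_2)$, so the solution is a gradient expanding Ricci soliton there; being self-similar and immortal it extends to a gradient expanding soliton on $M\times[0,\infty)$, along which the equality in the monotonicity formula persists for all $t>0$. Consequently the weighted density is constant on all of $(0,\infty)$, hence equals its $t\to 0^+$ value $2^n e^{-|V|^2_{g(0)}}$, so $\Omega(t)=T_xM$ up to a null set and $\widetilde{\mathcal V}_+(t)\equiv(4\pi)^{n/2}$.

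It remains to see that a solution with $\widetilde{\mathcal V}_+\equiv(4\pi)^{n/2}$ is the trivial flow on flat Euclidean space; this also disposes of the ``in particular'' clause, since $\widetilde{\mathcal V}_+(\bar t)=(4\pi)^{n/2}$ together with monotonicity and the bound forces $\widetilde{\mathcal V}_+(t)\equiv(4\pi)^{n/2}$ on $0<t\le\bar t$, hence $\widetilde{\mathcal V}_+(t_1)=\widetilde{\mathcal V}_+(t_2)$ for $0<t_1<t_2\le\bar t$. From the previous paragraph we have the soliton identity $Rc+\nabla^2 l_+=\tfrac{1}{2t}g$ (trace: $R+\Delta l_+=\tfrac{n}{2t}$) on $M\times(0,\infty)$ and the constancy $t^{-n/2}e^{l_+(\gamma_V(t),t)}\mathcal L_+J_V(t)\equiv 2^n e^{|V|^2_{g(0)}}$. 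Feeding these into the equality case of the (forward) differential inequalities for $l_+$ forces $R\equiv 0$, hence $Rc\equiv 0$ and $\nabla^2 l_+=\tfrac{1}{2t}g$ with $l_+\ge 0$; thus $2t\,l_+(\cdot,t)$ is a proper nonnegative function whose Hessian is the metric, so $(M,g(t))$ is isometric to flat $\mathbb R^n$ with $l_+=|x|^2/(4t)$, and the flow is stationary, i.e.\ the trivial flow on flat Euclidean space.

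I expect the main obstacle to be the equality analysis of the second paragraph: justifying differentiation of $\widetilde{\mathcal V}_+$ under the integral sign across the moving cut locus $\partial\Omega(t)$, and extracting from the proof of Theorem \ref{R_Volume} the precise list of non-positive terms whose simultaneous vanishing yields the full gradient-expanding-soliton identities (the analogue of Perelman's rigidity discussion for shrinkers, transported to the expander setting of \cite{FIN}), together with controlling the asymptotics of $l_+$ and $\mathcal L_+J_V$ as $t\to 0^+$ so that the limiting soliton is genuinely smooth on $[0,\infty)$. By contrast, the monotonicity, the sharp bound $(4\pi)^{n/2}$, and well-definedness on complete noncompact manifolds are essentially formal once Theorem \ref{R_Volume} and the stated $t\to 0^+$ limit are in hand.
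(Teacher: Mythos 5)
Your first two paragraphs follow the paper's own route essentially verbatim: pointwise monotonicity of the density from Theorem \ref{R_Volume}, the time--independent weight, the computation $\lim_{t\to 0^+}t^{-n/2}e^{l_+(\gamma_V(t),t)}\mathcal{L}_+J_V(t)=2^ne^{|V|^2_{g(0)}}$, the Gaussian integral giving $(4\pi)^{n/2}$, and the reduction of the equality case to the gradient expanding soliton identity $Rc+\mathrm{Hess}(-l_+)=-\tfrac{g}{2t}$. That part is fine (your extra observation that the $t\to0^+$ limit of $\widetilde{\mathcal V}_+$ actually equals $(4\pi)^{n/2}$ is a harmless strengthening the paper does not claim).

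The genuine gap is in your final step, from ``gradient expanding soliton'' to ``flat Euclidean space.'' You assert that feeding the soliton identity and the constancy of the density ``into the equality case of the (forward) differential inequalities for $l_+$ forces $R\equiv 0$,'' but you give no argument, and none of the displayed inequalities (\ref{eq_1_4}), (\ref{eq_1_5}) yields $R\equiv 0$ from the soliton identity alone: nonflat gradient expanding solitons exist, and on such a soliton the monotonicity formula is an equality for all $t>0$, so no purely local/equality-case manipulation can rule them out. What excludes them here is precisely the hypothesis that the flow is smooth on $M\times[0,T]$ \emph{down to $t=0$}. The paper's mechanism is to integrate $\nabla l_+$ to a family of diffeomorphisms $\phi_t$ with $\phi_{\bar t}=\mathrm{Id}$, show $\tfrac{\bar t}{t}\phi_t^*g(t)$ is constant, hence $g(t)=\tfrac{t}{\bar t}(\phi_t^{-1})^*g(\bar t)$, so that $|Rm|(\phi_t^{-1}(y),t)=\tfrac{\bar t}{t}|Rm|(y,\bar t)$ blows up as $t\to0^+$ unless $Rm(\cdot,\bar t)\equiv0$; the contradiction with smoothness at $t=0$ gives flatness, and only then does $\mathrm{Hess}\,l_+=\tfrac{1}{2t}g$ together with the lower bound $l_+(y,t)\geq e^{-2ct}\tfrac{d_{g(0)}(x,y)^2}{4t}-\tfrac{nc}{3}t$ (needed for properness, which you also omit) identify $(M,g)$ with $\mathbb R^n$. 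Without this self-similarity/curvature-scaling argument, or an equivalent use of the regularity of $g(0)$, your proof of the last two assertions of the theorem does not close.
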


We also have the following rescaling property for the weighted forward reduced volume.
\begin{thm}\label{WFRV_invariant}
We have $\mathcal {\widetilde{V}}^{j}_+(t)=\mathcal {\widetilde{V}}_+(\lambda_j^{-1}t)$ under the rescaling
$g_j(t)=\lambda_j g(\lambda_j^{-1}t)$, where $\mathcal {\widetilde{V}}^{j}_+$ and $\mathcal {\widetilde{V}}_+$ denote
the weighted forward reduced volume with respect to metric $g_j$ and $g$ respectively.
\end{thm}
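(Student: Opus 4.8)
The plan is to track how every ingredient in the definition (\ref{WFRV}) of the weighted forward reduced volume transforms under the parabolic rescaling $g_j(t)=\lambda_j g(\lambda_j^{-1}t)$, and then to perform a single change of variables in the integral over $T_xM^n$. Throughout, write $s=\lambda_j^{-1}t$ and decorate each object with a superscript $g_j$ or $g$ according to the flow with respect to which it is computed. Note that $g_j(t)$ is again a solution of (\ref{Ricci_flow}).

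First I would establish the scaling of the forward $\mathcal{L}_+$-length. If $\gamma$ is a path $\eta\mapsto\gamma(\eta)$, $\eta\in[0,t]$, and we set $\bar\gamma(\bar\eta)=\gamma(\lambda_j\bar\eta)$ for $\bar\eta\in[0,s]$, then using $R_{g_j(\eta)}=\lambda_j^{-1}R_{g(\lambda_j^{-1}\eta)}$, $|v|^2_{g_j(\eta)}=\lambda_j|v|^2_{g(\lambda_j^{-1}\eta)}$, and the substitution $\eta=\lambda_j\bar\eta$, a direct computation gives $\mathcal{L}_+^{g_j}(\gamma)=\sqrt{\lambda_j}\,\mathcal{L}_+^{g}(\bar\gamma)$. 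Taking infima yields $L_+^{g_j}(y,t)=\sqrt{\lambda_j}\,L_+^{g}(y,s)$, hence by (\ref{reduced_l}) $l_+^{g_j}(y,t)=l_+^{g}(y,s)$; i.e. the forward reduced distance is scale-invariant. Moreover the correspondence $\gamma\leftrightarrow\bar\gamma$ carries minimizers to minimizers and $\mathcal{L}_+^{g_j}$-geodesics to $\mathcal{L}_+^{g}$-geodesics, and inspecting the limit $\lim_{\eta\to0}\sqrt{\eta}\,\gamma'(\eta)$ shows that the $\mathcal{L}_+^{g_j}$-geodesic $\gamma_V$ corresponds to the $\mathcal{L}_+^{g}$-geodesic $\gamma_{\sqrt{\lambda_j}V}$. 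Therefore $\mathcal{L}_+^{g_j}\exp(V,t)=\mathcal{L}_+^{g}\exp(\sqrt{\lambda_j}V,s)$, the cut times satisfy $\tau_V^{g_j}=\lambda_j\,\tau_{\sqrt{\lambda_j}V}^{g}$, and consequently $V\in\Omega^{g_j}(t)$ if and only if $\sqrt{\lambda_j}V\in\Omega^{g}(s)$.

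Next I would handle the Jacobian factor $\mathcal{L_+}J_V(t)$. Since $g_j(0)=\lambda_j g(0)$, a $g_j(0)$-orthonormal basis of $T_xM$ is $\lambda_j^{-1/2}\{E^0_i\}$ with $\{E^0_i\}$ a $g(0)$-orthonormal basis; differentiating $\mathcal{L}_+^{g_j}\exp(\cdot,t)=\mathcal{L}_+^{g}\exp(\sqrt{\lambda_j}\,\cdot\,,s)$, applying $D(\mathcal{L}_+\exp(V,t))(E^0_i)=J^V_i(t)$, and using this change of frame shows that the $\mathcal{L}_+^{g_j}$-Jacobi fields $J_i^{V}$ normalized as in (\ref{Jacobi}) with respect to $g_j$ coincide with the $\mathcal{L}_+^{g}$-Jacobi fields $J_i^{\sqrt{\lambda_j}V}$ along the same curve. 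Then $\langle\cdot,\cdot\rangle_{g_j(t)}=\lambda_j\langle\cdot,\cdot\rangle_{g(s)}$ multiplies the Gram matrix in (\ref{LJacobi}) by $\lambda_j$, giving $\mathcal{L_+}J^{g_j}_V(t)=\lambda_j^{n/2}\,\mathcal{L_+}J^{g}_{\sqrt{\lambda_j}V}(s)$. Combining this with $t^{-n/2}=\lambda_j^{-n/2}s^{-n/2}$, the weight identity $e^{-2|V|^2_{g_j(0)}}=e^{-2|\sqrt{\lambda_j}V|^2_{g(0)}}$, the distance identity $l_+^{g_j}(\gamma_V(t),t)=l_+^{g}(\gamma_{\sqrt{\lambda_j}V}(s),s)$, and the volume-form identity $dx_{g_j(0)}(V)=\lambda_j^{n/2}\,dx_{g(0)}(V)$, I would substitute everything into (\ref{WFRV1}) and change variables by $U=\sqrt{\lambda_j}V$ (which simultaneously turns $\Omega^{g_j}(t)$ into $\Omega^{g}(s)$ and contributes a further $\lambda_j^{-n/2}$ from $dx_{g(0)}$). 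All powers of $\lambda_j$ cancel and the integrand becomes exactly that of $\mathcal {\widetilde{V}}_+(s)$, whence $\mathcal {\widetilde{V}}^{j}_+(t)=\mathcal {\widetilde{V}}_+(\lambda_j^{-1}t)$.

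The argument is computational rather than subtle, and the only point demanding real care is the bookkeeping of the factors of $\lambda_j$, which enter from three places at once: the rescaling of the metric on the fibre $T_xM$ (affecting both $dx_{g_j(0)}$ and, through the orthonormality condition in (\ref{Jacobi}), effectively translating the base point of the exponential map from $V$ to $\sqrt{\lambda_j}V$), the rescaling of $g_j(t)$ in the Gram matrix of (\ref{LJacobi}), and the explicit $t^{-n/2}$ prefactor. The crux is to verify that after the single substitution $U=\sqrt{\lambda_j}V$ these contributions cancel exactly; I would also double-check the scale-invariance $l_+^{g_j}(\gamma_V(t),t)=l_+^{g}(\gamma_{\sqrt{\lambda_j}V}(s),s)$ on its own, since this is what makes the threshold $(4\pi)^{n/2}$ in Theorem \ref{WFRV_monotone} behave consistently under rescaling.
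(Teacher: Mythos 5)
Your proposal is correct and follows essentially the same route as the paper: establish $l_+^{g_j}(y,t)=l_+^{g}(\lambda_j^{-1}t)$ and the correspondence $\gamma_V^{g_j}(t)=\gamma_{\sqrt{\lambda_j}V}^{g}(\lambda_j^{-1}t)$, track the scaling of $\mathcal{L_+}J_V$ and of $dx_{g_j(0)}$, and substitute $U=\sqrt{\lambda_j}V$. The paper's proof simply asserts these three scaling identities and performs the same change of variables; your version supplies the verifications it omits, and your bookkeeping of the powers of $\lambda_j$ checks out.
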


The organization of the paper is as follows. In section 2, we first
recall some basic formulas and properties about forward reduced
entropy in \cite{FIN}. Then we study the properties of forward
reduced volume density which defined by forward
$\mathcal{L}_+$-exponential map. Finally, we give the proofs of
Theorem \ref{WFRV_monotone} and Theorem \ref{WFRV_invariant}.

\section{Weighted Forward Reduced volume and Expanders}

Before we present the proofs of Theorem \ref{WFRV_monotone} and
Theorem \ref{WFRV_invariant}, we recall some basic formulas and
properties about forward reduced entropy in \cite{FIN}. Clearly, one
can show that the $l_+$-length $l_+(y,t)$ is locally lipschitz function and the
cut-Locus of $\mathcal{L}_+ exp(V,t)$ is a closed set of measure
zero by using the similar methods in \cite{Y1}.

We need the following two theorems due to M.Feldman, T.Ilmanen, L.Ni
\cite{FIN}, which state the following adapted form.

\begin{thm}\cite{FIN}\label{variation}
Let $\gamma$ be a path $(x(\eta),\eta)$ joining $(x,0)$ and $(y,t)$.
Denote $L_+\doteq L_+(y,t)$ be the forward
$\mathcal{L}_+$-length joining $(x,0)$ and $(y,t)$.
Set $X=\gamma'(t)$ and $Y$ be a variational vector along $\gamma$
such that $Y(0)=0$. The first variation of $\mathcal{L}_+$ is that
\begin{align}\label{first_variation}
    \delta \mathcal{L}_+=2\sqrt{t}<X,Y>(t)+\int^t_0
    \sqrt{\eta}<Y,\nabla R-2 \nabla_X
    X+4Rc(X,\cdot)-\frac{1}{\eta}X>d\eta.
\end{align}
If $\gamma(t)$ is the mimimal $\mathcal{L}_+$-geodesic, then
\begin{align}\label{eq_nabla}
    \nabla L_+=2\sqrt{t}X,
\end{align}
\begin{align}\label{eq_KR}
    t^{\frac{3}{2}}(R+|X|^2)=K+\frac{1}{2}L_+,
\end{align}
where $K=\int^t_0 \eta^{\frac{3}{2}}H(X)d\eta$, $H(X)=\frac{\partial
R}{\partial t}+2<\nabla R,X>+2Rc(X,X)+\frac{R}{t}$. The second
variation of $\mathcal{L}_+$ is that
\begin{align}\label{eq_sec_variation}
    \delta^2_{Y} \mathcal{L}_+=& 2\sqrt{t}<X,Y>(t)+\int^t_0
    \sqrt{\eta}(HessR(Y,Y)-2R(X,Y,X,Y)\nonumber\\
    &+2|\nabla_X Y|^2+4\nabla_Y Rc(Y,X)-2\nabla_X Rc(Y,Y)) d\eta.
\end{align}
Let $\widetilde{Y}$ be a vector field along $\gamma$ satisfies the
ODE
\begin{equation} \label{eq_soliton_type}
\left\{
\begin{array}{ll}
\nabla_X\widetilde{Y}(\eta)=Rc(\widetilde{Y}(\eta),\cdot)+\frac{1}{2\eta}\widetilde{Y}(\eta),\eta\in [0,t]\\
\widetilde{Y}(0)=Y(0)=0.
\end{array}
\right.
\end{equation}
Then
\begin{align}\label{eq_Hess}
   Hess L_+(\widetilde{Y},\widetilde{Y})\leq
   \frac{|\widetilde{Y}|^2}{\sqrt{t}}+2\sqrt{t}Rc(\widetilde{Y},\widetilde{Y})-\int^t_0\sqrt{\eta}H(X,\widetilde{Y})d\eta,
\end{align}
where $H(X,\widetilde{Y})=-Hess
R(X,\widetilde{Y})+2R(X,\widetilde{Y},X,\widetilde{Y})+2|Rc(X,\cdot)|^2+\frac{Rc(\widetilde{Y},\widetilde{Y})}{t}+2\frac{\partial
Rc}{\partial t}(\widetilde{Y},\widetilde{Y})-4\nabla_{\widetilde{Y}}
Rc(\widetilde{Y},X)+4\nabla_X Rc(\widetilde{Y},\widetilde{Y})$. The
equality holds in (\ref{eq_Hess}) if and only if the vector filed
$\widetilde{Y}$ satisfying (\ref{eq_soliton_type}) is an
$\mathcal{L}_+$-Jacobi field.
\end{thm}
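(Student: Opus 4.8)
The plan is to prove the four assertions in sequence by direct first- and second-variation computations, the only nonstandard feature throughout being that the background metric $g(\eta)$ evolves by $\partial_\eta g=-2Rc$. \textit{For the first variation and \textup{(2.3)},} I would take a variation $\gamma_s(\eta)$ with field $Y=\partial_s\gamma$, $Y(0)=0$, write $X=\partial_\eta\gamma$, and differentiate $\mathcal{L}_+(\gamma_s)=\int_0^t\sqrt\eta(R+|X|^2)\,d\eta$ in $s$ at $s=0$. Because the time $\eta$ (hence $g(\eta)$) is frozen while $s$ varies, $\partial_sR=\langle\nabla R,Y\rangle$ and $\partial_s|X|^2=2\langle\nabla_XY,X\rangle$, using the symmetry $\nabla_{\partial_s}X=\nabla_XY$. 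The single Ricci-flow-specific step is integrating $\int 2\sqrt\eta\langle\nabla_XY,X\rangle\,d\eta$ by parts: differentiating $\langle Y,X\rangle$ in $\eta$ brings down $\partial_\eta g=-2Rc$, producing the $4Rc(X,\cdot)$ term, while the weight $\sqrt\eta$ produces $-\tfrac1\eta X$; the boundary contribution survives only at $\eta=t$ since $Y(0)=0$. This gives (2.2). Vanishing of the interior integrand is the $\mathcal{L}_+$-geodesic equation $\nabla_XX=\tfrac12\nabla R+2Rc(X,\cdot)-\tfrac1{2\eta}X$, and for a minimizer the interior integral drops under endpoint-fixing variations, leaving $\nabla L_+=2\sqrt t\,X$.

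\textit{For \textup{(2.4)}} I would set $Q(\eta)=\eta^{3/2}(R+|X|^2)$ along the minimal geodesic. Using $\tfrac{d}{d\eta}R=\partial_\eta R+\langle\nabla R,X\rangle$ and $\tfrac{d}{d\eta}|X|^2=-2Rc(X,X)+2\langle\nabla_XX,X\rangle$ together with the geodesic equation, the combination collapses to $\tfrac{d}{d\eta}(R+|X|^2)=H(X)-\tfrac1\eta(R+|X|^2)$ with $H(X)$ exactly as defined, so that $Q'=\tfrac12\sqrt\eta(R+|X|^2)+\eta^{3/2}H(X)$. Integrating on $[0,t]$ and noting $Q(\eta)\to0$ (since $\sqrt\eta\,\gamma'_V(\eta)\to V$ forces $\eta^{3/2}|X|^2=O(\sqrt\eta)$) yields $t^{3/2}(R+|X|^2)=K+\tfrac12L_+$.

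\textit{For the second variation \textup{(2.5)}} I would differentiate the first-variation integrand once more in $s$ at $s=0$. The term $\partial_s\langle\nabla R,Y\rangle$ gives $\mathrm{Hess}\,R(Y,Y)+\langle\nabla R,\nabla_YY\rangle$, and $\partial_s[2\langle\nabla_XY,X\rangle]$ gives $2|\nabla_XY|^2+2\langle\nabla_{\partial_s}\nabla_XY,X\rangle$. The key point, and the main obstacle, is the commutation of $\nabla_{\partial_s}$ and $\nabla_{\partial_\eta}$: since the spatial connection $\nabla=\nabla^{g(\eta)}$ is time-dependent, one has
\[
\nabla_{\partial_s}\nabla_{\partial_\eta}Y=\nabla_{\partial_\eta}\nabla_{\partial_s}Y+R(Y,X)Y-(\partial_\eta\Gamma)(Y,Y),
\]
and feeding $\partial_\eta\Gamma^l_{jk}=-g^{lm}(\nabla_jR_{km}+\nabla_kR_{jm}-\nabla_mR_{jk})$ and contracting against $X$ produces precisely the $4\nabla_YRc(Y,X)-2\nabla_XRc(Y,Y)$ terms, the $R(Y,X)Y$ piece giving the $-2R(X,Y,X,Y)$ term. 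The leftover $\nabla_YY$-terms $\langle\nabla R,\nabla_YY\rangle+2\langle\nabla_X\nabla_YY,X\rangle$ integrate by parts (again using $\partial_\eta g=-2Rc$) into the geodesic-equation expression, which vanishes, leaving only the endpoint term $2\sqrt t\langle X,\nabla_YY\rangle(t)$. Collecting gives (2.5).

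\textit{For the Hessian estimate \textup{(2.6)}--\textup{(2.7)}} I would build the test field $\widetilde Y$ by solving the singular linear ODE (2.6); the $\tfrac1{2\eta}$ pole forces $\widetilde Y(\eta)=O(\sqrt\eta)$, so $\widetilde Y(0)=0$ is attained with $\widetilde Y\not\equiv0$, and prescribing $\widetilde Y(t)$ fixes a genuine variation. The virtue of (2.6) is that $\tfrac{d}{d\eta}|\widetilde Y|^2=\tfrac1\eta|\widetilde Y|^2$, whence $|\widetilde Y(\eta)|^2=\tfrac\eta t|\widetilde Y(t)|^2$. Since the $\mathcal{L}_+$-Jacobi field minimizes the index form (the integral part of (2.5)) among fields with the same endpoints, and $\mathrm{Hess}\,L_+(\widetilde Y,\widetilde Y)$ equals that minimum, one gets $\mathrm{Hess}\,L_+(\widetilde Y,\widetilde Y)\le$ the integral of (2.5) evaluated at $\widetilde Y$, with equality iff $\widetilde Y$ is $\mathcal{L}_+$-Jacobi. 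I would then substitute $\widetilde Y$, expand $2|\nabla_X\widetilde Y|^2=2|Rc(\widetilde Y,\cdot)|^2+\tfrac2\eta Rc(\widetilde Y,\widetilde Y)+\tfrac1{2\eta^2}|\widetilde Y|^2$ via (2.6), and reorganize the integrand into $-\sqrt\eta\,H(X,\widetilde Y)$ plus exact $\eta$-derivatives; the $\tfrac1{2\eta^2}|\widetilde Y|^2$ piece integrates against $|\widetilde Y|^2=\tfrac\eta t|\widetilde Y(t)|^2$ to $\tfrac{|\widetilde Y|^2}{\sqrt t}$, while the remaining total derivative yields $2\sqrt t\,Rc(\widetilde Y,\widetilde Y)$, giving (2.7). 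The hard part is exactly this last bookkeeping: verifying that the curvature, $\nabla Rc$, $\partial_\eta Rc$ and $|Rc|^2$ contributions assemble into the stated trace-Harnack quantity $H(X,\widetilde Y)$, which forces simultaneous use of the evolution identities for $R$, $Rc$ and $\Gamma$.
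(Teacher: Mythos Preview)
Your proposal is correct and follows the standard direct-computation route for Perelman-style $\mathcal{L}$-geometry, adapted to the forward flow. Note, however, that the paper does not actually prove this theorem: it is quoted from Feldman--Ilmanen--Ni \cite{FIN} (and ultimately Perelman \cite{P1}) and stated without proof, so there is no in-paper argument to compare against. Your outline is precisely the computation one finds in those references, including the key Ricci-flow-specific ingredients you flag (the $\partial_\eta g=-2Rc$ correction in the integration by parts, and the $\partial_\eta\Gamma$ term in the commutator $\nabla_{\partial_s}\nabla_{\partial_\eta}-\nabla_{\partial_\eta}\nabla_{\partial_s}$). One minor remark: the boundary term in the stated second-variation formula (2.5) reads $2\sqrt t\langle X,Y\rangle(t)$, which is a typo in the paper; your version $2\sqrt t\langle X,\nabla_YY\rangle(t)$ is the correct one, and indeed this is the term that must cancel (or be absorbed) when passing from $\delta^2_Y\mathcal{L}_+$ to $\mathrm{Hess}\,L_+$.
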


\begin{thm}\cite{FIN}\label{variation2}
Let $l_+\doteq l_+(y,t))=\frac{L_+(y,t)}{2\sqrt{t}}$ be the $l_+$-length
from $(x,0)$ to $(y,t)$. If $(y,t)$ is not in the cut-Locus of
$\mathcal{L}_+ exp$, then at $(y,t)$
\begin{align}
    \frac{\partial l_+}{\partial t}=R-\frac{l_+}{t}-\frac{K}{2t^{
    \frac{3}{2}}},\label{eq_l_1}\\
    |\nabla l_+|^2=\frac{l_+}{t}-R+\frac{K}{t^{
    \frac{3}{2}}},\label{eq_l_2}\\
    \Delta l_+ \leq R+\frac{n}{2t}-\frac{K}{2t^{
    \frac{3}{2}}},\\
    \frac{\partial l_+}{\partial t}+\Delta l_+ +|\nabla
    l_+|^2-R-\frac{n}{2t}\leq 0,\label{eq_1_4}\\
    2\Delta l_+ +|\nabla l_+|^2-R-\frac{l_++n}{t}\leq 0,\label{eq_1_5}
\end{align}
\end{thm}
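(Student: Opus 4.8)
The plan is to reduce the five assertions to three core computations: the time derivative formula (\ref{eq_l_1}), the gradient formula (\ref{eq_l_2}), and the Laplacian estimate, after which (\ref{eq_1_4}) and (\ref{eq_1_5}) drop out by pure algebra. Throughout I work at a point $(y,t)$ off the cut-locus, where a unique minimal $\mathcal{L}_+$-geodesic $\gamma$ reaches $(y,t)$ with terminal velocity $X=\gamma'(t)$, and I use $l_+=L_+/(2\sqrt{t})$ from (\ref{reduced_l}). The first observation is that, since the spatial gradient is taken at fixed $t$, (\ref{eq_nabla}) gives $\nabla l_+ = \frac{1}{2\sqrt t}\nabla L_+ = X$, so $|\nabla l_+|^2=|X|^2$.

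For the gradient formula (\ref{eq_l_2}), I would rewrite (\ref{eq_KR}) as $|X|^2 = \frac{K}{t^{3/2}}+\frac{L_+}{2t^{3/2}}-R$ and note $\frac{L_+}{2t^{3/2}}=\frac{l_+}{t}$, which yields $|\nabla l_+|^2=\frac{l_+}{t}-R+\frac{K}{t^{3/2}}$ at once. For the time derivative (\ref{eq_l_1}), I would differentiate the length along the flow: the total $t$-derivative of $L_+$ along $\gamma$ equals the integrand of $\mathcal{L}_+$ at the endpoint, $\frac{d}{dt}L_+=\sqrt t(R+|X|^2)$, while the chain rule gives $\frac{d}{dt}L_+=\frac{\partial L_+}{\partial t}+\langle\nabla L_+,X\rangle=\frac{\partial L_+}{\partial t}+2\sqrt t|X|^2$. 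Subtracting gives $\frac{\partial L_+}{\partial t}=\sqrt t(R-|X|^2)$, and then $\frac{\partial l_+}{\partial t}=\frac{1}{2\sqrt t}\frac{\partial L_+}{\partial t}-\frac{L_+}{4t^{3/2}}=\frac12(R-|X|^2)-\frac{l_+}{2t}$; substituting the already-proved expression for $|X|^2$ produces (\ref{eq_l_1}).

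The Laplacian estimate is the heart of the matter and the step I expect to be hardest. I would trace the Hessian inequality (\ref{eq_Hess}) over an adapted frame. Choose vector fields $\widetilde Y_1,\dots,\widetilde Y_n$ along $\gamma$ solving the ODE (\ref{eq_soliton_type}) and normalized so that $\widetilde Y_i(t)$ is a $g(t)$-orthonormal basis of $T_yM$. A direct computation using $\partial_\eta g=-2Rc$ and the ODE shows $\frac{d}{d\eta}\langle\widetilde Y_i,\widetilde Y_j\rangle=\frac{1}{\eta}\langle\widetilde Y_i,\widetilde Y_j\rangle$, so the frame stays orthogonal with $\langle\widetilde Y_i,\widetilde Y_j\rangle(\eta)=\frac{\eta}{t}\delta_{ij}$; in particular $|\widetilde Y_i(t)|^2=1$ and $\sum_i Rc(\widetilde Y_i,\widetilde Y_i)(t)=R$. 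Summing (\ref{eq_Hess}) over $i$ at $\eta=t$ then gives $\Delta L_+\le \frac{n}{\sqrt t}+2\sqrt t\,R-\int_0^t\sqrt\eta\sum_i H(X,\widetilde Y_i)\,d\eta$. The crucial identity is the traced Harnack relation $\sum_i H(X,\widetilde Y_i)(\eta)=\frac{\eta}{t}H(X)(\eta)$, which follows from the contracted second Bianchi identities together with the $\frac{\eta}{t}$-scaling of the frame; with it the integral becomes $\frac1t\int_0^t\eta^{3/2}H(X)\,d\eta=\frac Kt$. Dividing by $2\sqrt t$ yields $\Delta l_+\le R+\frac{n}{2t}-\frac{K}{2t^{3/2}}$. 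Verifying this trace identity term-by-term against the displayed definition of $H(X,\widetilde Y)$ is the genuinely delicate computation.

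Finally, (\ref{eq_1_4}) and (\ref{eq_1_5}) follow by adding the three core relations. Summing (\ref{eq_l_1}), the Laplacian estimate, and (\ref{eq_l_2}) and subtracting $R+\frac{n}{2t}$, every contribution cancels except the inequality slack from $\Delta l_+$, giving (\ref{eq_1_4}); similarly $2\Delta l_++|\nabla l_+|^2-R-\frac{l_++n}{t}$ collapses to $\le 0$, giving (\ref{eq_1_5}). Both are one-line algebraic consequences once the three core formulas are in hand.
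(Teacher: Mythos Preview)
The paper does not supply its own proof of this theorem: it is stated with the citation \cite{FIN} and no proof environment follows. Your argument is the standard one from \cite{FIN} and \cite{P1}, and it is correct. In fact the paper itself uses exactly your frame construction and the scaling relation $\langle\widetilde Y_i,\widetilde Y_j\rangle(\eta)=\frac{\eta}{t}\delta_{ij}$ in the proof of Theorem~\ref{R_Volume} (see (\ref{eq_2.5_3})), together with the trace identity you highlight, so your approach is fully consistent with the paper's methods.
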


We first study properties of the forward reduced volume density
defined as
\begin{align}\label{reduced_VD}
d\mathcal {V}_+=t^{-\frac{n}{2}} e^{l_+(\gamma_V(t),t)}
\mathcal{L_+}J_V(t)dx_{g(0)}(V),
\end{align}
where $\mathcal{L_+}J_V(t)$ defined in (\ref{LJacobi}).

Note that the weighted forward reduced volume
\begin{align*}
\mathcal {\widetilde{V}}_+(t)=\int_{T_xM^n}
e^{-2|V|^2_{g(0)}}d\mathcal {V}_+.
\end{align*}
Analogous to \cite{P1}, we have the following theorem.
\begin{thm}\label{R_Volume}
The forward reduced volume density $d\mathcal {V}_+$ defined in
(\ref{reduced_VD}) is monotone non-increasing along the Ricci flow
(\ref{Ricci_flow}). Moreover, if $d\mathcal {V}_+(t_1)=d\mathcal
{V}_+(t_2)$ for some $0<t_1<t_2$, then this flow is a gradient
expanding soliton.
\end{thm}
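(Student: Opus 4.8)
The plan is to show that the logarithmic derivative of the density $d\mathcal{V}_+ = t^{-n/2} e^{l_+(\gamma_V(t),t)} \mathcal{L}_+ J_V(t)\, dx_{g(0)}(V)$ is non-positive pointwise in $V$, by mimicking Perelman's monotonicity computation for the backward reduced volume but carried through with the forward sign conventions. Fix $V \in T_x M$ with $\tau_V > t_2$, so that $\gamma_V$ is minimizing on $[0, t_2]$ and the quantities below are smooth there. Writing $y = \gamma_V(t)$, I would compute $\frac{d}{dt} \log\bigl( t^{-n/2} e^{l_+(y,t)} \mathcal{L}_+ J_V(t) \bigr)$, where the total $t$-derivative along the geodesic is $\frac{\partial}{\partial t} + \langle \nabla l_+, X\rangle$ on the $l_+$ term (with $X = \gamma_V'$). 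Using $\nabla L_+ = 2\sqrt{t}\, X$ from \eqref{eq_nabla}, we get $\langle \nabla l_+, X\rangle = |X|^2 = |\nabla l_+|^2$. The first two contributions therefore assemble into $-\frac{n}{2t} + \frac{\partial l_+}{\partial t} + |\nabla l_+|^2$, and by \eqref{eq_l_1} and \eqref{eq_l_2} this equals $-\frac{n}{2t} + R - \frac{l_+}{t} - \frac{K}{2t^{3/2}} + \frac{l_+}{t} - R + \frac{K}{t^{3/2}} = -\frac{n}{2t} + \frac{K}{2t^{3/2}}$.

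Next I would handle $\frac{d}{dt} \log \mathcal{L}_+ J_V(t)$, which is the trace of the second-fundamental-form-type operator built from $\mathcal{L}_+$-Jacobi fields. The standard identity is $\frac{d}{dt} \log \mathcal{L}_+ J_V(t) = \mathrm{tr}\bigl(\text{Hess}\, l_+\bigr) + (\text{a Ricci correction from the time-dependent metric})$; more precisely, differentiating $\log\sqrt{\det(\langle J_i^V, J_j^V\rangle)}$ produces a term $\sum_i \langle \nabla_X J_i, J_i\rangle$ in an orthonormalized frame, and since the Jacobi fields are the images of coordinate directions under $D(\mathcal{L}_+ \exp)$, this is exactly $\Delta l_+$ up to the extra term coming from $\partial_t g = -2Rc$. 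Plugging in the bound $\Delta l_+ \le R + \frac{n}{2t} - \frac{K}{2t^{3/2}}$ from Theorem \ref{variation2}, everything cancels against the $-\frac{n}{2t} + \frac{K}{2t^{3/2}}$ above (the $R$ from $\Delta l_+$ cancels the metric-evolution trace term $-\frac{\partial}{\partial t}\log\sqrt{\det g}$-type contribution, which along Ricci flow is $+R$; one must be careful that the signs line up for the \emph{forward} flow), giving $\frac{d}{dt} \log d\mathcal{V}_+ \le 0$. For $V \notin \Omega(t)$ we use the convention $\mathcal{L}_+ J_V \equiv 0$, and local minimizing-to-nonminimizing transitions only decrease the density, so monotonicity persists. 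The pointwise inequality integrates against the fixed measure $dx_{g(0)}(V)$ to give monotonicity of $\mathcal{V}_+$ itself.

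For the rigidity statement, suppose $d\mathcal{V}_+(t_1) = d\mathcal{V}_+(t_2)$ for some $0 < t_1 < t_2$. Then $\frac{d}{dt} \log d\mathcal{V}_+ \equiv 0$ on $[t_1, t_2]$, which forces equality in every inequality used above — in particular equality in \eqref{eq_Hess} for the fields $\widetilde{Y}$ solving \eqref{eq_soliton_type}, which by the last assertion of Theorem \ref{variation} means each such $\widetilde{Y}$ is an $\mathcal{L}_+$-Jacobi field. Chasing the equality case in the trace estimate for $\Delta l_+$ then yields the pointwise soliton identity $\text{Hess}\, l_+ + Rc - \frac{1}{2t} g = 0$ (the expanding-soliton equation, with the sign dictated by the forward flow), along the geodesics filling out a neighborhood; since the cut locus has measure zero this holds on all of $M$. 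Thus $(M, g(t))$ is a gradient expanding Ricci soliton. I expect the main obstacle to be purely bookkeeping of signs: the forward flow reverses several signs relative to Perelman's original computation, and the $\mathcal{L}_+$-Jacobi field evolution \eqref{eq_soliton_type} has $+Rc + \frac{1}{2\eta}$ where Perelman has $-Rc + \frac{1}{2\tau}$, so I must verify carefully that the cancellation $-\frac{n}{2t} + \frac{K}{2t^{3/2}} + \Delta l_+ + (\text{trace term}) \le 0$ genuinely closes up rather than leaving a stray term of the wrong sign — this is exactly where Theorem \ref{variation2}'s estimate \eqref{eq_1_5} (namely $2\Delta l_+ + |\nabla l_+|^2 - R - \frac{l_+ + n}{t} \le 0$) is likely the cleanest inequality to invoke, and I would organize the proof around it.
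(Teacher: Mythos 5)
Your proposal is correct and follows essentially the same route as the paper: both compute $\frac{d}{dt}\log d\mathcal{V}_+$ pointwise along the $\mathcal{L}_+$-geodesic using \eqref{eq_l_1}--\eqref{eq_l_2} (the paper uses \eqref{eq_KR}) for the $l_+$ part, the only cosmetic difference being that you invoke the already-traced estimate $\Delta l_+\le R+\frac{n}{2t}-\frac{K}{2t^{3/2}}$ (equivalently \eqref{eq_1_4}), while the paper applies the Hessian bound \eqref{eq_Hess} to the fields $\widetilde{E}_i$ and traces it by hand, which is also exactly how it obtains the rigidity from the Jacobi-field equality case of Theorem \ref{variation}, as you propose. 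One small sign slip to fix: chasing equality yields $Rc-\mathrm{Hess}\,l_++\frac{1}{2t}g=0$, i.e.\ $Rc+\mathrm{Hess}(-l_+)=-\frac{g}{2t}$, not $Rc+\mathrm{Hess}\,l_+-\frac{1}{2t}g=0$ as written.
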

\begin{proof}
Let $\gamma_V(t)$ be the minimal $\mathcal{L}_+$-geodesic defined in
(\ref{Lexp}) and $y=\gamma_V(t)$. We consider $(y,t)$ in the
cut-Locus of $\mathcal{L}_+ exp(V,t)$.  Recall that $\nabla
l_+(y,t)=\gamma_V'(t)=X(t)$. Then by (\ref{eq_KR}) and
(\ref{eq_l_1}), we get
\begin{align}
    \frac{\partial l_+(\gamma_V(t),t)}{\partial t}&=\frac{\partial l_+(y,t)}{\partial
    t}+\nabla l\cdot X\nonumber\\
    &=R-\frac{l_+(y,t)}{t}-\frac{K}{2t^{
    \frac{3}{2}}}+ |X|^2\nonumber\\
    &=\frac{1}{2}t^{-\frac{3}{2}}K.
\end{align}
For any fixed $t$, we choose an orthonormal basis $\{E_i(t)\}$ of
$T_{\gamma_V(t)}M$. We extend $E_i(\eta)$, $\eta\in [0,t]$ to an
$\mathcal{L}_+$-Jacobi field along $\gamma_V$ with $E_i(0)=0$. We
write $J_i^V(t)=\sum\limits_i^n A^j_i E_j(t)$ for same matrix
$(A^j_i)\in GL(n,\mathbb{R})$. Then $J_i^V(\eta)=\sum\limits_i^n
A^j_i E_j(\eta)$ for all $\eta\in [0,t]$.

Hence, by (\ref{eq_Hess}), we calculate at time $t$
\begin{eqnarray}
   \frac{d}{d \eta}|_{\eta=t} \ln \mathcal{L_+}J_V
    &=&\frac{d}{d\eta}|_{\eta=t}\ln \sqrt{det(<\sum\limits_{k=1}^n A_i^k E_k,\sum\limits_{l=1}^n A_i^l E_l>)}\nonumber\\
    &=&\frac{1}{2}\frac{d}{d\eta}|_{\eta=t}\sum\limits_{i}|E_{i}|^2\nonumber\\
    &=&\sum\limits_{i}(-Rc(E_{i},E_{i})+<\nabla_{E_{i}} X,E_{i}>)\nonumber\\
    &=&\sum\limits_{i}(-Rc(E_{i},E_{i})+\frac{1}{2\sqrt{t}}Hess L_+(E_{i},E_{i}))\label{lJV}\\
    &\leq&
    \sum\limits_{i}(\frac{1}{2t}-\frac{1}{2\sqrt{t}}\int^t_0\sqrt{\eta}H(X,\widetilde{E}_{i})d\eta)\label{eq_2.5_1}
\end{eqnarray}
where $\widetilde{E}_{i}(\eta)$ are the vector fields along
$\gamma_V$ satisfying
\begin{equation} \label{eq_soliton_type}
\left\{
\begin{array}{ll}
\nabla_X\widetilde{E}_i(\eta)=Rc(\widetilde{E}_i(\eta),\cdot)+\frac{1}{2\eta}\widetilde{E}_i(\eta),\eta\in [0,t]\\
\widetilde{E}_i(t)=E_i(t),
\end{array}
\right.
\end{equation}
which in particular implies that
\begin{align}\label{eq_2.5_3}
    <\widetilde{E}_i,\widetilde{E}_j>(\eta)=\frac{\eta}{t}<E_i,E_j>(t)=\frac{\eta}{t}\delta_{ij}.
\end{align}
It follows that
\begin{align*}
    H(X,\widetilde{E}_{i})(\eta)=\frac{\eta}{t}K.
\end{align*}
Hence
\begin{align*}
     \frac{d}{d \eta}|_{\eta=t} \ln \mathcal{L_+}J_V \leq  \frac{n}{2t}-\frac{1}{2}t^{-\frac{3}{2}}K,
\end{align*}
and
\begin{align}\label{eq_2.5_2}
    \frac{d }{d t}\ln d\mathcal{V}_+ &=-\frac{n}{2t}+\frac{\partial l+}{\partial t}+\frac{d ln \mathcal{L_+}J_V}{d
    t}\leq 0.
\end{align}
If equality in (\ref{eq_2.5_2}) holds, then we have equality in
(\ref{eq_2.5_1}) holds. By Theorem \ref{variation}, we conclude that
each $\widetilde{E}_i(\eta)$ is an $\mathcal{L}_+$-Jacobi field.
Hence
\begin{align}\label{eq_2.5_4}
    \frac{d}{d\eta}|_{\eta=t}|E_i|^2=\frac{d}{d\eta}|_{\eta=t}|\widetilde{E}_i|^2=\frac{|E_i(t)|^2}{t}.
\end{align}
Combining with (\ref{lJV}) and (\ref{eq_2.5_4}), we get
\begin{align*}
    Rc(E_i,E_i)-\frac{1}{2\sqrt{t}}Hess
    L_+(E_i,E_i)=-\frac{|E_i|^2}{2t}.
\end{align*}
\end{proof}

Now we can give the proof of Theorem \ref{WFRV_monotone}.

 \textbf{Proof of Theorem \ref{WFRV_monotone}.}
By Theorem \ref{R_Volume}, we know that
$$
\frac{d}{dt}(t^{-\frac{n}{2}} e^{l_+(\gamma_V(t),t)} \mathcal{L_+}J_V(t))\leq 0
$$
for $V\in \Omega(t)$.
It follows that
$$
\frac{d}{dt}(t^{-\frac{n}{2}} e^{l_+(\gamma_V(t),t)} \mathcal{L_+}J_V(t)e^{-2|V|^2_{g(0)}})\leq 0
$$
for $V\in \Omega(t)$.
Moreover, since we have
$\Omega(t_2)\subset \Omega(t_1)$,
$$\mathcal {\widetilde{V}}_+(t_2)\leq \mathcal
{\widetilde{V}}_+(t_1)$$
for $t_1<t_2$ .  We calculate that
\begin{align*}
    \lim\limits_{t\to 0+}l_+(\gamma_V(t),t)&=\lim\limits_{t\to 0+} \frac{1}{2\sqrt{t}}\int^t_0\sqrt{\eta}(R(\gamma_V(\eta),\eta)+|\frac{d \gamma_V}{d\eta}|^2)d\eta\\
    &=\lim\limits_{t\to 0+}t(R(\gamma_V(t),t)+|\frac{d \gamma_V}{dt}|^2) \\
    &=|V|^2_{g(0)}.
\end{align*}
Let
$J_i^V(t), i=1,\cdots,n,$ be $\mathcal{L}_+$-Jacobi fields along
$\gamma_V(t)$ with
\begin{align}\label{Jacobi}
J_i^V(0)=0, (\nabla_V J_i^V)(0)=E^0_i,
\end{align}
where
$\{E^0_i\}^n_{i=1}$ is an orthonormal basis for $T_xM$ with respect
to $g(0)$. Since $(\nabla_V J_i^V)(0)=E^0_i$ and $V=\lim\limits_{t\to 0} \sqrt{t} \gamma'_V(t) $, we get
\begin{align*}
    \lim\limits_{t\to 0^+}\frac{\mathcal{L_+}J_V(t)}{t^{\frac{n}{2}}}=\lim\limits_{t\to 0^+}\frac{\sqrt{det(<2\sqrt{t}E_i(t),2\sqrt{t}E_j(t)>_{g(0)})}}{t^{\frac{n}{2}}}=2^n,
\end{align*}
we conclude that
\begin{align*}
   \lim\limits_{t\to 0^+} t^{-\frac{n}{2}} e^{l_+(\gamma_V(t),t)} \mathcal{L_+}J_V(t)=2^n e^{|V|^2_{g(0)}}.
\end{align*}
Hence
\begin{align*}
    \lim\limits_{t\to 0^+}\mathcal {\widetilde{V}}_+(t)\leq \int_{T_pM}2^n
    e^{-|V|^2_{g(0)}}dx(V)=(4\pi)^{\frac{n}{2}}.
\end{align*}
If $\mathcal {\widetilde{V}}_+(t_1)=\mathcal {\widetilde{V}}_+(t_2)$
for any $0<t_1<t_2$, then $d\mathcal {V}_+(t_1)=d\mathcal
{V}_+(t_2)$ for any $0<t_1<t_2$. So $(M^n,g(t))$ must be a gradient
expanding soliton by Theorem \ref{R_Volume}, i.e. we have
\begin{align*}
    Rc+Hess (-l_+)=-\frac{g}{2t}
\end{align*}
for some smooth function $l_+$ on $M^n$. Let $\phi_t:M\to M, 0<t\leq
\bar{t}$ be the one-parameter family of diffeomorphisms obtained by
\begin{align*}
    \frac{d\phi_t}{dt}=\nabla l_+\quad\text{and}\quad \phi_{\bar{t}}=Id.
\end{align*}
We consider $h(t)=\frac{\bar{t}}{t}\phi_t^*g(t)$ and calculate
\begin{align*}
    \frac{d h}{d t}
    &=-\frac{\bar{t}}{t^2}\phi_t^*g(t)+\frac{\bar{t}}{t}\phi^*_t\mathcal{L}_{\frac{d\phi_t}{dt}}(g(t))-2\frac{\bar{t}}{t}\phi^*_tRc(g(t))\\
    &=-\frac{\bar{t}}{t^2}\phi_t^*g(t)+\frac{\bar{t}}{t}2Hess (l_+)+\frac{\bar{t}}{t}\phi^*_t(\frac{g}{t}-2Hess(l_+))=0.
\end{align*}
It follows that
\begin{align*}
    g(t)=\frac{t}{\bar{t}}(\phi^{-1}_t)^*g(\bar{t}).
\end{align*}
Suppose that there is some $(y,\bar{t})$ with $|Rm|(y,\bar{t})=K>0$,
we have $|Rm|(\phi^{-1}_t(y),t)=\frac{K\bar{t}}{t}$, and these
curvatures are not bounded as $t\to 0$, which is a contradiction.
Then we have
\begin{align*}
    Hess(l_+)=\frac{1}{2t}g.
\end{align*}
Thus $l_+$ is strictly convex function. The similar arguments to
Lemma 2.3 in \cite{Y1} can show that
$$
l_+(y,t)\geq e^{-2ct}\frac{d_{g(0)}(x,y)}{4t}-\frac{nc}{3}t,
$$
if $Rc\geq -cg$ on $[0,t]$, so that $l_+(y,t)$ have the only minimum
point in $M^n$. Hence $M^n$ is diffeomorphic to $\mathbb{R}^n$.

 Since $\mathcal {\widetilde{V}}_+(t)$ is monotone
non-increasing, $\mathcal {\widetilde{V}}_+(t)$ is independent of
$t$ if $\mathcal {\widetilde{V}}_+(\bar{t})=(4\pi)^{\frac{n}{2}}$
for some time $\bar{t}>0$. Then we derive that $M^n$ is isometric to
$\mathbb{R}^n$.
  $\Box$

Finally, we give the proof of Theorem \ref{WFRV_invariant}.

 \textbf{Proof of Theorem \ref{WFRV_invariant}.}We denote $\gamma_v^j(t)$  be the
minimal $\mathcal{L}_+$-geodesic with respect to $g_j(t)$ which starting from $(x_j,0)$  and satisfying $\lim\limits_{t\to 0} \sqrt{t}
\frac{d\gamma^j_V(t)}{dt} = V$.

We have that
$\gamma^{j}_{\sqrt{\lambda_j^{-1}}V}(t)=\gamma_V(\lambda_j^{-1}t)$,
$l_+^{j}(y,t)=l_+(y,\lambda_j^{-1}t)$ and
$\mathcal{L_+}J^j_{V}(t)dx_{g_j(0)}(V)=(\lambda_j^{-1})^{-\frac{n}{2}}\mathcal{L_+}J_{\sqrt{\lambda_j}V}(\lambda_j^{-1}t)dx_{g(0)}(\sqrt{\lambda_j}V)$.
Hence
\begin{align*}
\mathcal {\widetilde{V}}^{j}_+(t)
&=\int_{T_xM^n}(t)^{-\frac{n}{2}}e^{l^j_+(\gamma_{V}(t),t)}\mathcal{L_+}J^j_{V}(t)e^{-2|V|^2_{g_j(0)}}dx_{g_j(0)}(V)\\
&=\int_{T_xM^n}(\lambda_j^{-1}t)^{-\frac{n}{2}}
e^{l_+(\gamma_{\sqrt{\lambda_j}V}(\lambda_j^{-1}t),\lambda_j^{-1}t)}\\
&\qquad\qquad\quad\times\mathcal{L_+}J_{\sqrt{\lambda_j}V}(\lambda_j^{-1}t)e^{-2|\sqrt{\lambda_j}V|^2_{g(0)}}dx_{g(0)}(\sqrt{\lambda_j}V)\\
&= \mathcal {\widetilde{V}}_+(\lambda_j^{-1}t).
\end{align*}
$\Box$

\thanks{\textbf{Acknowledgement}: We would like to express our gratefulness to our thesis advisor professor
Li Ma for his constant support and encouragement.}

\end{document}